\numberwithin{equation}{section}
\newtheorem{Theorem}{Theorem}[section]
\newtheorem{Lemma}{Lemma}[section]
\newtheorem{Assumption-Notation}[Theorem]{Assumption-Notation}
\newtheorem{Conjecture}{Conjecture}
\newtheorem{Remark}{Remark}[section]
\newtheorem{Corollary}{Corollary}[section]
\renewcommand\@biblabel[1]{}
\renewenvironment{thebibliography}[1]
{\section*{\refname}%
\@mkboth{\MakeUppercase\refname}{\MakeUppercase\refname}%
\list{\@biblabel{\@arabic\c@enumiv}}%
{\settowidth\labelwidth{\@biblabel{#1}}%
\leftmargin\labelwidth
\advance\leftmargin\labelsep
\advance\leftmargin by 2em%
\itemindent -2em%
\@openbib@code
\usecounter{enumiv}%
\let\p@enumiv\@empty
\renewcommand\theenumiv{\@arabic\c@enumiv}}%
\sloppy
\clubpenalty4000
\@clubpenalty \clubpenalty
\widowpenalty4000%
\sfcode`\.\@m}
{\def\@noitemerr
{\@latex@warning{Empty `thebibliography' environment}}%
\endlist}
\long\def\symbolfootnote[#1]#2{\begingroup
\def\thefootnote{\fnsymbol{footnote}}\footnote[#1]{#2}\endgroup}
\begin{document}
\begin{frontmatter}
\title{Occupation times of refracted L\'evy processes with jumps having rational Laplace transforms}

 \author{Lan Wu\corref{}}
 \ead{lwu@pku.edu.cn}
 \author{Jiang Zhou\corref{cor1}}
 \ead{1101110056@pku.edu.cn}
 \cortext[cor1]{Corresponding author.}

\address{School of Mathematical Sciences, Peking University, Beijing 100871, P.R.China}

\begin{abstract}{We investigate a refracted L\'evy process driven by a jump diffusion process, whose jumps have rational Laplace transforms. For such a stochastic process, formulas for the Laplace transform of its occupation times are deduced. To derive the main results, some modifications on our previous approach have been made. In addition, we obtain a very interesting identity, which is conjectured to hold for a general refracted L\'evy process.
 }
\end{abstract}

\begin{keyword} Occupation times;  Refracted L\'evy process; Rational Laplace transform; Wiener-Hopf factorization.
\end{keyword}
\end{frontmatter}

\section{Introduction}
A refracted L\'evy process $\tilde{U}=(\tilde{U}_t)_{t\geq 0}$ (which is proposed in Kyprianou and Loeffen (2010)) is a stochastic process whose dynamic is given by
\begin{equation}
\tilde{U}_t=X_t-\alpha \int_{0}^{t}\textbf{1}_{\{\tilde{U}_s > b\}}ds,
\end{equation}
where $\alpha, b \in \mathbb R$, $X=(X_t)_{t\geq 0}$ is a L\'evy process and $\textbf{1}_{A}$ represents the indicator function. Under the assumption that $X$ in (1.1) is a L\'evy process without positive jumps, many results on the corresponding process $\tilde{U}$ have been derived, one can refer to Kyprianou and Loeffen (2010), Kyprianou et al. (2014) and Renaud (2014). For example, formulas for the joint Laplace transform of $(\nu_a^-, \int_{0}^{\nu_a^-}\textbf{1}_{\{\tilde{U}_s< b\}}ds)$ has been derived  in Renaud (2014), where $\nu_a^-$ is the first passage time of $\tilde{U}$. However, all the above three papers focus on the case that $X$ in (1.1) is a L\'evy  process with only negative jumps.

In Zhou and Wu (2015), we considered a slight different process $U=(U_t)_{t \geq 0}$ which satisfies
\begin{equation}
\begin{split}
&dU_t = dX_t - \alpha \textbf{1}_{\{U_t < b\}}dt, \ \ t>0,
\end{split}
\end{equation}
with $U_0 = X_0$, where $X=(X_t)_{t \geq 0}$ is a hyper-exponential jump diffusion process. In that paper, we have derived formulas for the Laplace transform of $\int_{0}^{t}\textbf{1}_{\{U_s < b\}}ds$ with respect to $t$.
Here, we continue this research under a more general assumption on $X$.
Note that if $\mathbb P_x(\tilde{U}_t=b) = 0$ for Lebesgue almost every $t \geq 0$, then the two processes $\tilde{U}$ and $U$ are the same in essence. This means that the process $U$ can be still treated as a refracted L\'evy process. So this paper and Zhou and Wu (2015) extend the research on refracted L\'evy processes to the case that the underlying L\'evy process $X$ has two-sided jumps.

Since the process $X$ considered here is quite general, some calculations become extremely complicated if we  apply directly the approach developed in Zhou and Wu (2015). To reduce the complexity, some modifications have been done on the method in Zhou and Wu (2015). With this updated method, we success in deriving formulas for the Laplace transform of $\int_{0}^{t}\textbf{1}_{\{U_s < b\}}ds$. Moreover, we obtain a very useful identity (see (4.1) below) and conjecture that this identity holds for a general refracted L\'evy process $U$ and the matching driven L\'evy process $X$. A future research direction is to show this interesting conjecture.

One immediate application of our results is to compute the total time of charging fees for Variable Annuities with a state-dependent fee structure as in Zhou and Wu (2015).
Such a fee charging method was proposed in Bernard et al. (2014). Under the state-dependent fee structure, the insurers deduct fees only when policyholders' account value is lower than a pre-specified level. This fee deducting approach has several advantages, see Bernard et al. (2014) and Delong (2014) for the details. The reason why we are interested in the quantity $\int_{0}^{t}\textbf{1}_{\{U_s < b\}}ds$ is that it is the total time that the insures can charge fees under the state-dependent fee structure; see Zhou and Wu (2015) for more details.

The remainder of this paper is organized as follows. Details of our model are introduced in Section 2, meanwhile, some important results are given. We then derive the main results in Section 3 and finally present an interesting conjecture in Section 4.

\section{Model assumption and preliminary results}
\subsection{The model.}
Let $X=(X_t)_{t \geq 0}$ be a jump diffusion process whose dynamic is given by
\begin{equation}
  X_t = X_0 + \mu t+\sigma W_t + \sum_{k=1}^{N^+_t}Y^+_k-\sum_{k=1}^{N^-_t}Y^-_k,
\end{equation}
where $\mu$ and $X_0$ are constants; $(W_t)_{t\geq 0}$ is a standard Brownian motion with $W_0=0$, and $\sigma > 0$ is the volatility of the diffusion; $(N^+_t)_{t\geq 0}$ is a Poisson process with rate $\lambda^+$, and $(N^-_t)_{t\geq 0}$ is a Poisson process with rate $\lambda^-$; $Y^+_k$ $\left(Y_k^-\right)$,  $k=1, 2,...$, are independent and identically distributed random variables; moreover, $(W_t)_{t\geq 0}$, $(N^+_t)_{t\geq 0}$, $(N^-_t)_{t\geq 0}$, $\{Y^+_k; k=1,2,\ldots\}$ and $\{Y^-_k; k=1,2,\ldots\}$ are independent mutually; finally, the density functions of $Y_1^+$ and  $Y_1^-$ are given respectively by
\begin{equation}
p^+(y)=\sum_{k=1}^{m^+}\sum_{j=1}^{m_k}c_{kj}\frac{(\eta_k)^jy^{j-1}}{(j-1)!}e^{-\eta_k y}, \ \ y > 0,
\end{equation}
and
\begin{equation}
p^-(y)=\sum_{k=1}^{n^-}\sum_{j=1}^{n_k}d_{kj}\frac{(\vartheta_k)^jy^{j-1}}{(j-1)!}e^{-\vartheta_k y}, \ \ y > 0.
\end{equation}
In the following, the law of $X$ starting from $x$ is denoted by $\mathbb P_x$ with $\mathbb E_x$ denoting the corresponding expectation; when $x = 0$, we write $\mathbb P$ and $\mathbb E$ for brevity. Without loss of generality, in (2.2) and (2.3), we assume that $\eta_i \neq \eta_j$ and $\vartheta_i \neq \vartheta_j$ for all $i \neq j$.

\begin{Remark}
The process $X$ defined by (2.1) is a L\'evy process with jumps having rational Laplace transforms and  has been investigated by many papers, see, e.g., Lewis and Mordecki (2008) and Fourati (2010).
The probability density function (2.2) is quite general, including phase-type distributions. So the process $X$ in (2.1) can be used to approximate any other L\'evy process (see Proposition 1 in Asmussen et al. (2004)).
\end{Remark}

For given $b \in \mathbb R$, let $U_t$ be the unique strong solution (which is a strong Markov process; see Remark 3 in Kyprianou and Loeffen (2010)) of the following stochastic differential equation:
\begin{equation}
\begin{split}
&dU_t = dX_t - \alpha \textbf{1}_{\{U_t < b\}}dt, \ \ t>0,\\
&and  \ \ \
U_0 = X_0,
\end{split}
\end{equation}
where $X$ is given by (2.1) and  $\alpha \in \mathbb R$ is fixed. In the following, we want to derive formulas for the Laplace transform of $\int_{0}^{t}\textbf{1}_{\{U_s < b\}}ds$, i.e.,
\begin{equation}
\int_{0}^{\infty}e^{-q t}\mathbb E_x\left[\int_{0}^{t}\textbf{1}_{\{U_s < b\}}ds\right]dt=\frac{1}{q}\mathbb E_x\left[\int_{0}^{e(q)}\textbf{1}_{\{U_s < b\}}ds\right],
\end{equation}
where $q > 0$ and $e(q)$ stands for an exponential random variable, which is independent of the process $U$ and whose expectation is equal to $\frac{1}{q}$.

For any given and fixed $\alpha \in \mathbb R$, set $Y = \{Y_t :=  X_t - \alpha t; t\geq 0\}$. Denote by $\tilde{\mathbb P}_y$ the law of $Y$ starting from $y$ and by $\tilde{\mathbb E}_y$ the corresponding expectation. When $y=0$, we write briefly $\tilde{\mathbb P}$ and
$\tilde{\mathbb E}$. For any $a, c \in \mathbb R$, we introduce the following stopping times of $X$, $Y$ and $U$:
\begin{equation}
\tau_{a,X}^{-} := \inf\{t\geq 0: X_t < a\}, \ \ \tau_{c,Y}^{+}:= \inf\{t\geq 0: Y_t \geq c \},
\end{equation}
\begin{equation}
\kappa_a^-:=\inf\{t\geq 0: U_t < a \}, \ \ \ \ \kappa_c^+ :=\inf\{t \geq 0: U_t \geq c\}.
\end{equation}

\subsection{Some preliminary results.}
First, introduce the following two rational functions:
\begin{equation}
\begin{split}
&\psi(z):= iz\mu
-\frac{\sigma^2}{2}z^2  +\lambda^+\left(\sum_{k=1}^{m^+}\sum_{j=1}^{m_k}\frac{c_{kj}(\eta_k)^j}{(\eta_k-iz)^j}-1\right)
+
\lambda^-\left(\sum_{k=1}^{n^-}\sum_{j=1}^{n_k}\frac{d_{kj}(\vartheta_k)^j}{(\vartheta_k+iz)^j}-1\right),\\
&\tilde{\psi}(z):= \psi(z) - i\alpha z.
\end{split}
\end{equation}
For $z\in \mathbb R$, we have $\tilde{\psi}(z)= \ln\left(\tilde{\mathbb E}\left[e^{iz Y_1}\right]\right)$ and $\psi(z)= \ln\left(\mathbb E\left[e^{iz X_1}\right]\right)$. Besides, in the rest of the paper, let $Re(x)$ represent the real part of $x$.

The following lemma is taken from  Lemma 1.1 in Lewis and Mordecki (2008).

\begin{Lemma}
For $q > 0$, the equation $\tilde{\psi}(z)=\psi(z)- i\alpha z = q$ has, in the set $Im(z) < 0$, a total of $M^+_q$ distinct roots $-i\beta_{1,q}$, $-i\beta_{2,q}$, $\ldots$, $-i\beta_{M^+_q,q}$, with respective multiplicities $M^q_{1}(=1), M^q_{2}, \ldots, M^q_{M^+_q}$, ordered such that $0< \beta_{1,q}< Re(\beta_{2,q})\leq \cdots \leq Re(\beta_{M^+_q,q})$. Moreover,
\begin{equation}
\sum_{k=1}^{M^+_q}M^q_{k}=\sum_{k=1}^{m^+}m_k+1.
\end{equation}
\end{Lemma}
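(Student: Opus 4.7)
The plan is to reduce Lemma 2.1 to counting zeros of a meromorphic function in the right half-plane via the argument principle. Substituting $w = iz$ maps $\{Im(z) < 0\}$ bijectively onto $\{Re(w) > 0\}$, and the equation $\tilde{\psi}(z) = q$ becomes $F(w) := \tilde{\psi}(-iw) - q = 0$, where, after using the density normalizations $\sum_{k,j} c_{kj} = \sum_{k,j} d_{kj} = 1$,
$$F(w) = \tfrac{\sigma^2}{2} w^2 + (\mu - \alpha) w + \lambda^+\sum_{k=1}^{m^+}\sum_{j=1}^{m_k} c_{kj}\left(\frac{\eta_k}{\eta_k - w}\right)^{\!j} + \lambda^- \sum_{k=1}^{n^-}\sum_{j=1}^{n_k} d_{kj}\left(\frac{\vartheta_k}{\vartheta_k + w}\right)^{\!j} - (\lambda^+ + \lambda^- + q).$$
In $\{Re(w) > 0\}$, $F$ is meromorphic with poles only at $w = \eta_k$ of order $m_k$, a total of $P := \sum_{k=1}^{m^+} m_k$ poles counted with multiplicity.

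To apply the argument principle, I first verify that $F$ has no zero on the imaginary axis. For $w = iv$ with $v \in \mathbb{R}$, $F(iv) = \tilde{\psi}(v) - q$, and since $|\mathbb{E}[e^{ivX_1}]| \leq 1$ and $\alpha v$ is real, $Re(\tilde{\psi}(v)) = Re(\psi(v)) = \log|\mathbb{E}[e^{ivX_1}]| \leq 0$. Combined with $q > 0$, this gives $Re(F(iv)) \leq -q < 0$, so the curve $v \mapsto F(iv)$ stays in the open left half-plane and a continuous branch $\arg F(iv) \in (\pi/2, 3\pi/2)$ is available.

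Next I close with the right semicircle $\{Re^{i\theta} : \theta \in [-\pi/2, \pi/2]\}$ traversed counter-clockwise for $R$ large enough to enclose every right-half-plane zero and pole, obtaining a positively oriented contour $\Gamma_R$. Then $\#\mathrm{zeros} - \#\mathrm{poles} = (2\pi)^{-1}\Delta_{\Gamma_R} \arg F$, and I read off the two pieces from the leading asymptotics of $F$. If $\sigma > 0$, $F(w) \sim (\sigma^2/2) w^2$ so the arc contributes $2\pi$ and the imaginary axis $0$ (both endpoints lie near the real negative value $-\sigma^2 R^2/2$). If $\sigma = 0$ and $\mu > \alpha$, $F(w) \sim (\mu - \alpha) w$ contributes $\pi$ on each piece. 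If $\sigma = 0$ and $\mu < \alpha$, the negative-slope linear asymptotics give $\pi$ on the arc but $-\pi$ on the imaginary axis. If $\sigma = 0$ and $\mu = \alpha$, $F(w) \to -(\lambda^+ + \lambda^- + q)$, and both pieces contribute $0$. Totalling, $\Delta_{\Gamma_R} \arg F /(2\pi)$ equals $1$ in the ``otherwise'' case and $0$ when $\sigma = 0$ and $\mu \leq \alpha$, so the number of zeros in $\{Re(w) > 0\}$ is $P + 1$ or $P$ respectively, which is exactly (2.11).

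To identify the simple real root $\beta_{1,q}$, I restrict $u \mapsto \tilde{\psi}(-iu)$ to the real interval $(-\vartheta_1, \eta_1)$, where it is the cumulant-generating exponent of $Y_1$: real-analytic, strictly convex, equal to $0$ at $u = 0$, and tending to $+\infty$ as $u \uparrow \eta_1$. The intermediate value theorem and strict convexity yield a unique $\beta_{1,q} \in (0, \eta_1)$ with $\tilde{\psi}(-i\beta_{1,q}) = q$, at which the derivative is strictly positive; hence $\beta_{1,q}$ is a simple root and $M_1^q = 1$. Its minimality of real part follows from the standard inequality $|\mathbb{E}[e^{(u+iv)X_1}]| \leq \mathbb{E}[e^{uX_1}]$ with equality only when $v = 0$: any other root with real part $u$ must satisfy $\tilde{\psi}(-iu) \geq q$ with strict inequality when the root is non-real, hence $u > \beta_{1,q}$. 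The hardest part will be the case-by-case asymptotic bookkeeping on the imaginary axis, which requires a uniform lower bound $-Re(\psi(v)) \geq c > 0$ outside a neighborhood of the origin to rule out spurious loops of $F(iv)$ around $0$, together with precise endpoint limits pinning down the net argument change in each of the four cases.
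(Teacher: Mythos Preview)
The paper does not prove this lemma at all; it simply records it as Lemma~1.1 of Lewis and Mordecki (2008). Your argument-principle proof is the standard route to such root-counting statements for L\'evy exponents with rational jump transforms, and it is essentially the argument one finds in that reference, so your proposal is correct and appropriate.

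One small comment: the concern in your final paragraph about ``spurious loops'' of $F(iv)$ around $0$ and the need for a uniform bound $-Re(\psi(v)) \geq c > 0$ away from the origin is unnecessary. You have already established that $Re(F(iv)) = Re(\tilde{\psi}(v)) - q \leq -q < 0$ for every $v \in \mathbb{R}$, so the image of the imaginary axis lies entirely in the open left half-plane $\{Re < 0\}$; a continuous branch of $\arg F(iv)$ can therefore be chosen in $(\pi/2, 3\pi/2)$, the net change on that piece is at most $\pi$ in absolute value, and it is determined purely by the endpoint values. No further lower bound on $-Re(\psi(v))$ is needed, and in fact $Re(\psi(0)) = 0$, so the uniform bound you propose would fail near the origin anyway. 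The endpoint asymptotics you compute, together with the large-$R$ behaviour on the arc (which also guarantees that all right-half-plane zeros and poles are enclosed), already suffice to carry out the bookkeeping in each of the four cases.
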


The following lemma summarizes the results given by Theorem 2.2 and Corollary 2.1 in  Lewis and Mordecki (2008), but uses different notations.

\begin{Lemma}
 For any given $q > 0$, it holds that
\begin{equation}
\tilde{\mathbb E}\left[e^{-s \overline{Y}_{e(q)}}\right]
=\prod_{k=1}^{m^+}\left(\frac{s+\eta_k}{\eta_k}\right)^{m_k}
\prod_{k=1}^{M^+_q}\left(\frac{\beta_{k,q}}{s+\beta_{k,q}}\right)^{M^q_{k}} =\frac{C_1}{s+\beta_{1,q}}+\sum_{k=2}^{M_q^+}\sum_{j=1}^{M^q_{k}}\frac{C_{kj}}{(s+\beta_{k,q})^j}:=\psi^+(s), \ \ s \geq 0,
\end{equation}
and
\begin{equation}
\tilde{\mathbb P}\left(\overline{Y}_{e(q)}\in dy\right)=C_1 e^{-\beta_{1,q}y}dy+\sum_{k=2}^{M_q^+}
\sum_{j=1}^{M^q_{k}}C_{kj}\frac{y^{j-1}}{(j-1)!}e^{-\beta_{k,q}y}dy, \ \ y \geq 0,
\end{equation}
where
\begin{equation} \frac{C_1}{\beta_{1,q}}=\prod_{k=1}^{m^+}\left(\frac{\eta_k-\beta_{1,q}}{\eta_k}\right)^{m_k}\prod_{k=2}^{M_q^+}\left(
\frac{\beta_{k,q}}{\beta_{k,q}-\beta_{1,q}}\right)^{M^q_{k}},
\end{equation}
and for $k=2,\ldots, M_q^+$ and $j=0,\cdots, M^q_{k}-1$,
\begin{equation}
C_{k,M^q_{k}-j}=\frac{1}{j!}\left[\frac{\partial^j}{\partial s^j}\left(\psi^+(s)(s+\beta_{k,q})^{M^q_{k}}\right)\right]_
{s=-\beta_{k,q}}.
\end{equation}
\end{Lemma}

An application of the above two lemmas to $-X$ yields the following result.

\begin{Lemma}
(1) For $q > 0$, the equation $\psi(z) = q$ has, in the set $Im(z) > 0$, a total of $N_q^-$ distinct roots $i\gamma_{1,q}$, $i\gamma_{2,q}$, $\ldots$, $i\gamma_{N_q^-,q}$, with respective multiplicities $N^q_{1}(=1), N^q_{2}, \ldots, N^q_{N_q^-}$, ordered such that $0< \gamma_{1,q}< Re(\gamma_{2,q})\leq \cdots \leq Re(\gamma_{N_q^-,q})$. In addition,
\begin{equation}
\sum_{k=1}^{N_q^-}N^q_{k}=1+ \sum_{k=1}^{n^-}n_k.
\end{equation}

(2) For any given $q > 0$, we have
\begin{equation}
\mathbb E\left[e^{s\underline{X}_{e(q)}}\right]
=\prod_{k=1}^{n^-}\left(\frac{s+\vartheta_k}{\vartheta_k}\right)^{n_k}
\prod_{k=1}^{N_q^-}\left(\frac{\gamma_{k,q}}{s+\gamma_{k,q}}\right)^{N^q_{k}}
=\frac{D_1}{s+\gamma_{1,q}}+\sum_{k=2}^{N_q^-}\sum_{j=1}^{N^q_{k}}\frac{D_{kj}}{(s+\gamma_{k,q})^j}:=\psi^-(s), \ \ s \geq 0,
\end{equation}
and
\begin{equation}
\mathbb P\left(\underline{X}_{e(q)}\in dy\right)= D_1 e^{\gamma_{1,q}y}dy+\sum_{k=2}^{N_q^-}
\sum_{j=1}^{N^q_{k}}D_{kj}\frac{(-y)^{j-1}}{(j-1)!}e^{\gamma_{k,q}y}dy, \ \ y \leq 0,
\end{equation}
where
\begin{equation} \frac{D_1}{\gamma_{1,q}}=\prod_{k=1}^{n^-}\left(\frac{\vartheta_k-\gamma_{1,q}}{\vartheta_k}\right)^{n_k}\prod_{k=2}^{N_q^-}
\left(\frac{\gamma_{k,q}}{\gamma_{k,q}-\gamma_{1,q}}\right)^{N^q_{k}},
\end{equation}
and for $k=2,\ldots, N_q^-$ and $j=0,\ldots, N^q_{k}-1$,
\begin{equation}
D_{k,N^q_{k}-j}=\frac{1}{j!}\left[\frac{\partial^j}{\partial s^j}\left(\psi^-(s)(s+\gamma_{k,q})^{N^q_{k}}\right)\right]_
{s=-\gamma_{k,q}}.
\end{equation}
\end{Lemma}

\begin{Remark}
In what follows, $\psi^+(s)$ in (2.10) and $\psi^-(s)$ in (2.15) are treated as rational functions, i.e.,we extend the definition of $\psi^+(s)$$(\psi^-(s))$ to the whole complex plane except at $-\beta_{1,q}, \ldots, -\beta_{M_q^+,q}$$(-\gamma_{1,q}, \ldots, -\gamma_{N_q^-,q})$.
\end{Remark}

The following lemma can be obtained easily by using Lemmas 2.2 and 2.3. But for the convenience of the reader, we give the details.
\begin{Lemma}
(1) For $q>0$ and $x, y \leq 0$, we have
\begin{equation}
\begin{split}
\mathbb E\left[e^{-q\tau_{x,X}^-}\textbf{1}_{\{X_{\tau_{x,X}^-}-x \in dy\}}\right]=D_0(x)\delta_0(dy)+
\sum_{k=1}^{n^-}\sum_{j=1}^{n_k}D_{kj}(x)\frac{(\vartheta_k)^j(-y)^{j-1}}{(j-1)!}e^{\vartheta_k y}dy,
\end{split}
\end{equation}
where $\delta_0(dy)$ is the Dirac delta at $y=0$; for fixed $x \leq 0$, $D_0(x)$ and $D_{kj}(x)$ are given by rational expansion:
\begin{equation}
\begin{split}
D_0(x)+\sum_{k=1}^{n^-}\sum_{j=1}^{n_k}D_{kj}(x)\left(\frac{\vartheta_k}{\vartheta_k+ s}\right)^j
&=\frac{D_1e^{\gamma_{1,q}x}}{\psi^-(s)(s+\gamma_{1,q})}
+\frac{1}{\psi^-(s)}\sum_{k=2}^{N_q^-}\sum_{j=1}^{N_k^q}D_{kj} \int_{-\infty}^{0}\frac{(-y-x)^{j-1}}{(j-1)!}e^{sy+\gamma_{k,q}(y+x)}dy\\
&=\frac{D_1 e^{\gamma_{1,q}x}}{\psi^-(s)(s+\gamma_{1,q})}
+\frac{1}{\psi^-(s)}\sum_{k=2}^{N_q^-}\sum_{j=1}^{N^q_{k}}D_{kj}
\frac{(-1)^{j-1}}{(j-1)!}\frac{\partial^{j-1}}{\partial \gamma^{j-1}}\left(\frac{e^{\gamma x}}{s+\gamma}\right)_{\gamma=\gamma_{k,q}}.
\end{split}
\end{equation}

(2) For $q>0$, $x>0$ and $y \geq  0$,
\begin{equation}
\tilde{\mathbb E}\left[e^{-q \tau_{x,Y}^+}\textbf{1}_{\{Y_{\tau_{x,Y}^+}-x \in dy\}}\right]=
C_0(x)\delta_0(dy)+
\sum_{k=1}^{m^+}\sum_{j=1}^{m_k}C_{kj}(x)\frac{(\eta_k)^jy^{j-1}}{(j-1)!}e^{-\eta_k y}dy,
\end{equation}
where for fixed $x\geq 0$, $C_0(x)$ and $C_{kj}(x)$ are given by rational expansion:
\begin{equation}
\begin{split}
C_0(x)+\sum_{k=1}^{m^+}\sum_{j=1}^{m_k}C_{kj}(x)\left(\frac{\eta_k}{\eta_k+ s}\right)^j
&=\frac{C_1 e^{-\beta_{1,q}x}}{\psi^+(s)(s+\beta_{1,q})}
+\frac{1}{\psi^+(s)}\sum_{k=2}^{M_q^+}\sum_{j=1}^{M^q_{k}}C_{kj}
\int_{0}^{\infty}\frac{(y+x)^{j-1}}{(j-1)!}e^{-sy-\beta_{k,q}(y+x)}dy\\
&=\frac{C_1 e^{-\beta_{1,q}x}}{\psi^+(s)(s+\beta_{1,q})}
+\frac{1}{\psi^+(s)}\sum_{k=2}^{M_q^+}\sum_{j=1}^{M^q_{k}}C_{kj}
\frac{(-1)^{j-1}}{(j-1)!}\frac{\partial^{j-1}}{\partial \beta^{j-1}}\left(\frac{e^{-\beta x}}{s+\beta}\right)_{\beta=\beta_{k,q}}.
\end{split}
\end{equation}
\end{Lemma}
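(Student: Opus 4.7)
My plan is to treat parts (1) and (2) by a direct Wiener--Hopf argument using Lemmas 2.3 and 2.2, and then to reduce part (3) to part (2) via Remark 2.2. Parts (1) and (2) are parallel up to reversing the roles of infimum and supremum, so I would describe only part (1) in detail; part (2) is obtained by applying the same reasoning to $Y$, with $\bar{Y}_{e(q)}$, $\psi^{+}$, and $\tilde{\tau}_{x,Y}^{+}$ in place of $\underline{X}_{e(q)}$, $\psi^{-}$, and $\tau_{x,X}^{-}$.

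For part (1), I would start from the strong Markov property applied at $\tau_{x,X}^{-}$ together with the memorylessness of $e(q)$. Since the infimum over $[0,\tau_{x,X}^{-}]$ is realised at $X_{\tau_{x,X}^{-}}$ and the post-$\tau_{x,X}^{-}$ infimum is an independent copy of $\underline{X}_{e(q)}$, one obtains for $y\leq 0$ the convolution identity
\begin{equation*}
\mathbb P\bigl(\tau_{x,X}^{-}\leq e(q),\,\underline{X}_{e(q)}-x\in dy\bigr)=\int_{(-\infty,0]}\mathbb E\!\left[e^{-q\tau_{x,X}^{-}};\,X_{\tau_{x,X}^{-}}-x\in du\right]\mathbb P\bigl(\underline{X}'_{e'(q)}\in dy-du\bigr).
\end{equation*}
Taking the bilateral Laplace transform in $y\leq 0$ converts this into the scalar identity
\begin{equation*}
\psi^{-}(s)\,\mathbb E\!\left[e^{-q\tau_{x,X}^{-}+s(X_{\tau_{x,X}^{-}}-x)}\right]=\int_{-\infty}^{0}e^{sy}\,\mathbb P\bigl(\tau_{x,X}^{-}\leq e(q),\,\underline{X}_{e(q)}-x\in dy\bigr),
\end{equation*}
and substituting the explicit density (2.19) from Lemma 2.3 on the right produces $D_{1}e^{\gamma_{1,q}x}/(s+\gamma_{1,q})$ from the exponential component together with integrals of the form $\int_{-\infty}^{0}(-y-x)^{j-1}/(j-1)!\,e^{sy+\gamma_{k,q}(y+x)}dy$ from the polynomial-weighted exponentials. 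Dividing through by $\psi^{-}(s)$ then yields the rational expansion (2.24) directly, and the derivative identity in its third line follows from recognising each such integral as $(-1)^{j-1}/(j-1)!\,\partial^{j-1}/\partial\gamma^{j-1}\bigl(e^{\gamma x}/(s+\gamma)\bigr)$ evaluated at $\gamma=\gamma_{k,q}$. The coefficient $D_{0}(x)$ is then recovered as the $s\to\infty$ limit of the right-hand side.

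For part (3), the key observation is that $\tilde{\mathbb E}$ corresponds to $Y_{0}=0$, so the hypothesis $x>0$ places the initial position strictly below the level $x$. Remark 2.2 then implies that $\tau_{x,Y}^{+}=\tilde{\tau}_{x,Y}^{+}$ holds $\tilde{\mathbb P}$-almost surely in every regime: when $Y$ is regular for $(0,\infty)$ the identification is immediate, and when $\sigma=0$ with $\mu\leq\alpha$ the identity $\tilde{\mathbb P}(Y_{\tau_{x,Y}^{+}}=x)=0$ again forces equality. Consequently, the random vectors $(\tau_{x,Y}^{+},Y_{\tau_{x,Y}^{+}}-x)$ and $(\tilde{\tau}_{x,Y}^{+},Y_{\tilde{\tau}_{x,Y}^{+}}-x)$ have identical $\tilde{\mathbb P}$-laws, and (2.27) follows immediately from (2.25). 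The main bookkeeping challenge lies in the partial-fraction expansion of parts (1) and (2), where the higher-order poles arising from the multiplicities $M_{k}^{q}$ and $N_{k}^{q}$ force the derivative formulas in (2.24) and (2.26); beyond this, no analytic difficulty arises, and the requirement $x>0$ in part (3) is precisely what avoids the $x=0$ case in which $\tau_{x,Y}^{+}$ and $\tilde{\tau}_{x,Y}^{+}$ can disagree when $Y$ fails to be regular for $(0,\infty)$.
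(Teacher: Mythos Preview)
Your proposal is correct and follows essentially the same route as the paper. The paper derives the key identity $\mathbb E\bigl[e^{-q\tau_{x,X}^{-}+sX_{\tau_{x,X}^{-}}}\bigr]=\mathbb E\bigl[e^{s\underline{X}_{e(q)}}\textbf{1}_{\{\underline{X}_{e(q)}<x\}}\bigr]/\psi^{-}(s)$ directly from strong Markov and memorylessness (citing Alili--Kyprianou), then plugs in the density (2.19); your convolution identity is exactly this statement written at the level of measures before taking Laplace transforms, and part (3) is handled identically in both via $\tilde{\mathbb P}(\tau_{x,Y}^{+}=\tilde{\tau}_{x,Y}^{+})=1$ for $x>0$.
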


\begin{proof}
First, it holds that (see Corollary 2 in Alili and Kyprianou (2005))
\begin{equation}
\mathbb E\left[e^{-q \tau_{x,X}^-+sX_{\tau_{x,X}^-}}\right]=\frac{\mathbb E\left[
e^{s\underline{X}_{e(q)}}\textbf{1}_{\{\underline{X}_{e(q)}< x \}}\right]}{\mathbb E\left[
e^{s\underline{X}_{e(q)}}\right]}, \ \ s \geq 0.
\end{equation}
Note that
\[
\int_{-\infty}^{0}(-y-x)^{j-1}e^{sy+\gamma_{k,q}(y+x)}dy=(-1)^{j-1}\frac{\partial^{j-1}}{\partial \gamma^{j-1}}\left(\frac{e^{\gamma x}}{s+\gamma}\right)_{\gamma=\gamma_{k,q}}.
\]
For $s>0$, it follows from (2.15), (2.16) and (2.23) that
\begin{equation}
\begin{split}
&\mathbb E\left[e^{-q \tau_{x,X}^-+s(X_{\tau_{x,X}^-}-x)}\right]=
\prod_{k=1}^{n^-}\left(\frac{\vartheta_k}{s+\vartheta_k}\right)^{n_k}
\prod_{k=1}^{N_q^-}\left(\frac{s+\gamma_{k,q}}{\gamma_{k,q}}\right)^{N^q_{k}}
\frac{D_1 e^{\gamma_{1,q}x}}{(s+\gamma_{1,q})}\\
&+\prod_{k=1}^{n^-}\left(\frac{\vartheta_k}{s+\vartheta_k}\right)^{n_k}
\prod_{k=1}^{N_q^-}\left(\frac{s+\gamma_{k,q}}{\gamma_{k,q}}\right)^{N^q_{k}}\sum_{k=2}^{N_q^-}\sum_{j=1}^{N^q_{k}}D_{kj}
\frac{(-1)^{j-1}}{(j-1)!}\frac{\partial^{j-1}}{\partial \gamma^{j-1}}\left(\frac{e^{\gamma x}}{s+\gamma}\right)_{\gamma=\gamma_{k,q}}.
\end{split}
\end{equation}

Note that formula (2.14) holds and the right-hand side of (2.24) is a rational function of $s$. For fixed $x \leq 0$, by rational expansion, we can obtain that there are some constants $D_0(x)$ and $D_{kj}(x)$ (depend only on $x$) such that
\[
\mathbb E\left[e^{-q \tau_{x,X}^-+s(X_{\tau_{x,X}^-}-x)}\right]=
D_0(x)+\sum_{k=1}^{n^-}\sum_{j=1}^{n_k}D_{kj}(x)\left(\frac{\vartheta_k}{\vartheta_k+ s}\right)^j,
\]
from which (2.19) is derived.

Similarly, for the proof of (2.21), we first use (2.10), (2.11) and the known result (see formula (4) in Alili and Kyprianou (2005))
\begin{equation}
\tilde{\mathbb E}\left[e^{-q \tilde{\tau}_{x,Y}^+-sY_{\tilde{\tau}_{x,Y}^+}}\right]=\frac{\tilde{\mathbb E}\left[
e^{-s\overline{Y}_{e(q)}}\textbf{1}_{\{\overline{Y}_{e(q)} > x \}}\right]}{\tilde{\mathbb E}\left[
e^{-s\overline{Y}_{e(q)}}\right]}, \ \ for \ \ x, s \geq 0,
\end{equation}
where $\tilde{\tau}_{x,Y}^+:=\inf\{t \geq 0: Y_t > x\}$, and then note that $\tilde{\mathbb P}\left(\tau_{x,Y}^+=\tilde{\tau}_{x,Y}^+\right)=1$ for $x>0$ since $\sigma > 0$.
\end{proof}

\begin{Remark}
Formulas (2.20) and (2.22) give
\[
D_0(0)+\sum_{k=1}^{n^-}\sum_{j=1}^{n_k}D_{kj}(0)\left(\frac{\vartheta_k}{\vartheta_k+ s}\right)^j=1,\ \
C_0(0)+\sum_{k=1}^{m^+}\sum_{j=1}^{m_k}C_{kj}(0)\left(\frac{\eta_k}{\eta_k+ s}\right)^j=1,
\]
which means
\begin{equation}
\begin{split}
&C_0(0)=D_0(0)=1 \ \ and  \ \  C_{kj}(0)=D_{kj}(0)=0.
\end{split}
\end{equation}
In addition, for fixed $x \leq 0$, formula (2.20) implies that $D_0(x)$ and $D_{kj}(x)$ have the following forms:
\[
\sum_{k=1}^{N_q^-}\sum_{j=1}^{N^q_{k}}\hat{D}_{kj}\frac{(-x)^{j-1}}{(j-1)!}
e^{\gamma_{k,q}x},
\]
where $\hat{D}_{kj}$ is a constant and not dependent on $x$. For $C_0(x)$ and $C_{kj}(x)$, similar results can be drawn from (2.22).
\end{Remark}

Some straightforward calculations give us the following lemma.

\begin{Lemma} For any $\theta >0$ and $s \geq 0$ with $\theta \neq s$,
\begin{equation}
\begin{split}
&\int_{0}^{\infty}e^{-\theta x}C_0(x)dx+\sum_{k=1}^{m^+}\sum_{j=1}^{m_k} \int_{0}^{\infty}e^{-\theta x}C_{kj}(x)dx\left(\frac{\eta_k}{\eta_k+s}\right)^j=\frac{1}{s-\theta}\left(\frac{\psi^+(\theta)}
{\psi^+(s)}-1\right),
\end{split}
\end{equation}
and
\begin{equation}
\begin{split}
&\int_{-\infty}^{0}e^{\theta x} D_0(x)dx +\sum_{k=1}^{n^-}\sum_{j=1}^{n_k} \int_{-\infty}^{0}e^{\theta x}D_{kj}(x)dx\left(\frac{\vartheta_k}{\vartheta_k+s}\right)^j=\frac{1}{s-\theta}\left(\frac{\psi^-(\theta)}
{\psi^-(s)}-1\right),
\end{split}
\end{equation}
where the functions $\psi^+(\cdot)$ and $\psi^-(\cdot)$ are given by (2.10) and (2.15), respectively.
\end{Lemma}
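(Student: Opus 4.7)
The plan is direct: start from the rational expansion (2.26) for $C_0(x)$ and $C_{kj}(x)$, multiply through by $e^{-\theta x}$, integrate $x$ over $[0,\infty)$, and then recognize the right-hand side as the claimed quotient by invoking the partial-fraction representation (2.12) of $\psi^+(s)$. The second identity (2.33) will then follow by the exact same argument applied to (2.24) and (2.18).

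First I would compute the $x$-integrals on the right of (2.26). The term $C_1 e^{-\beta_{1,q}x}/[\psi^+(s)(s+\beta_{1,q})]$ integrates to $C_1/[\psi^+(s)(s+\beta_{1,q})(\theta+\beta_{1,q})]$, and for $k\geq 2$, $j\geq 1$, since the $\beta$-derivative commutes with the $x$-integration for $\mathrm{Re}(\beta)>0$, one gets
\begin{equation*}
\int_{0}^{\infty}e^{-\theta x}\frac{(-1)^{j-1}}{(j-1)!}\frac{\partial^{j-1}}{\partial\beta^{j-1}}\left(\frac{e^{-\beta x}}{s+\beta}\right)_{\beta=\beta_{k,q}}dx=\frac{(-1)^{j-1}}{(j-1)!}\frac{\partial^{j-1}}{\partial\beta^{j-1}}\left(\frac{1}{(s+\beta)(\theta+\beta)}\right)_{\beta=\beta_{k,q}}.
\end{equation*}

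The key algebraic step is the partial fraction
\begin{equation*}
\frac{1}{(s+\beta)(\theta+\beta)}=\frac{1}{s-\theta}\left(\frac{1}{\theta+\beta}-\frac{1}{s+\beta}\right),
\end{equation*}
which I would differentiate $j-1$ times in $\beta$, using $\frac{\partial^{j-1}}{\partial\beta^{j-1}}(s+\beta)^{-1}=(-1)^{j-1}(j-1)!(s+\beta)^{-j}$, to obtain
\begin{equation*}
\frac{(-1)^{j-1}}{(j-1)!}\frac{\partial^{j-1}}{\partial\beta^{j-1}}\left(\frac{1}{(s+\beta)(\theta+\beta)}\right)_{\beta=\beta_{k,q}}=\frac{1}{s-\theta}\left(\frac{1}{(\theta+\beta_{k,q})^j}-\frac{1}{(s+\beta_{k,q})^j}\right).
\end{equation*}
The $k=1$ term admits the analogous identity $\frac{1}{(s+\beta_{1,q})(\theta+\beta_{1,q})}=\frac{1}{s-\theta}\bigl(\frac{1}{\theta+\beta_{1,q}}-\frac{1}{s+\beta_{1,q}}\bigr)$.

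Collecting all terms, the Laplace transform of (2.26) becomes
\begin{equation*}
\frac{1}{\psi^+(s)(s-\theta)}\left[\frac{C_1}{\theta+\beta_{1,q}}+\sum_{k=2}^{M_q^+}\sum_{j=1}^{M_k^q}\frac{C_{kj}}{(\theta+\beta_{k,q})^j}-\frac{C_1}{s+\beta_{1,q}}-\sum_{k=2}^{M_q^+}\sum_{j=1}^{M_k^q}\frac{C_{kj}}{(s+\beta_{k,q})^j}\right].
\end{equation*}
By (2.12), each of the two parenthesized groups equals $\psi^+(\theta)-C_0$ and $\psi^+(s)-C_0$ respectively, so the $C_0$ cancels and the bracket collapses to $\psi^+(\theta)-\psi^+(s)$. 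This yields exactly $\frac{1}{s-\theta}\bigl(\frac{\psi^+(\theta)}{\psi^+(s)}-1\bigr)$, which is (2.32).

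I do not expect a serious obstacle: the only mild technical point is justifying the interchange of the $\beta$-derivative with the $x$-integral, which is immediate because $\mathrm{Re}(\beta_{k,q})>0$ and $\theta>0$ make $e^{-(\theta+\beta)x}$ absolutely integrable uniformly in a neighborhood of $\beta=\beta_{k,q}$. The second identity (2.33) is obtained by the identical calculation starting from (2.24), replacing $\beta_{k,q}$ by $\gamma_{k,q}$, $C_{kj}$ by $D_{kj}$, $C_0$ by $D_0$, and $\psi^+$ by $\psi^-$, with the change of variable $x\mapsto -x$ folded into the substitution $e^{-\theta x}\mapsto e^{\theta x}$ on $(-\infty,0]$.
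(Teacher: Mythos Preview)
Your proposal is correct and follows essentially the same route as the paper's proof: integrate the rational expansion (2.26) against $e^{-\theta x}$, interchange the $\beta$-derivative with the $x$-integral, apply the partial fraction $\frac{1}{(s+\beta)(\theta+\beta)}=\frac{1}{s-\theta}\bigl(\frac{1}{\theta+\beta}-\frac{1}{s+\beta}\bigr)$, and then invoke (2.12) to collapse the sums. The paper's proof is simply a more compressed version of exactly this computation, and handles the second identity by symmetry as you do.
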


\begin{proof}
Note first that
\[
\int_{0}^{\infty}e^{-\theta x}\frac{\partial^{j-1}}{\partial \beta^{j-1}}\left(\frac{e^{-\beta x}}{s+\beta}\right)_{\beta=\beta_{k,q}}dx=\frac{\partial^{j-1}}{\partial \beta^{j-1}}\left(
\frac{1}{s-\theta}\left(\frac{1}{\theta+\beta}-\frac{1}{s+\beta}\right)\right)_{\beta=\beta_{k,q}}.
\]
Applying (2.10) and (2.22) leads to (2.27). The proof of (2.28) is similar.
\end{proof}

\begin{Remark}
In fact, identity (2.27) holds for a general  L\'evy process, i.e.,
\begin{equation}
\int_{0}^{\infty}e^{-\theta x} \tilde{\mathbb E}\left[e^{-q\tilde{\tau}_{x,Y}^+-s(Y_{\tilde{\tau}_{x,Y}^+}-x)}\right]dx=\frac{1}{s-\theta}
\left(\frac{\tilde{\mathbb E}\left[e^{-\theta \bar{Y}_{e(q)}}\right]}
{\tilde{\mathbb E}\left[e^{-s \bar{Y}_{e(q)}}\right]}-1\right),
\end{equation}
where $Y_t$ can be assumed to be a general L\'evy process and $\tilde{\tau}_{x,Y}^+:=\inf\{t \geq 0: Y_t > x\}$.  Formula (2.29) is known as the  Pecherskii Rogozin identity, which can be found from section 3.1 in Alili and Kyprianou (2005) or formula (3.2) in Pecherskii and Rogozin (1969).
\end{Remark}

\begin{Remark}
Compared with our previous paper (Zhou and Wu (2015)), the most different part in this one is that it is very difficult to deal with the expressions of $C_0(x)$, $C_{kj}(x)$, $D_0(x)$ and $D_{kj}(x)$ in Lemma 2.4. For example, we cannot find an easy approach to compute $\int_{0}^{\infty}e^{-\theta x}C_{kj}(x)dx$. In Section 3 below, we modify the method in Zhou and Wu (2015) and find that results in Lemma 2.5 are enough to derive the final outcomes.
\end{Remark}

\section{Main results}

In this section, we want to derive the expression of the expectation of $\int_{0}^{e(q)}\textbf{1}_{\{U_s < b\}}ds$ and we first deduce its  Laplace transform.

\begin{Theorem}
For any given $p, q > 0$, we have
\begin{equation}
\mathbb E_x\left[e^{-p \int_{0}^{e(q)}\textbf{1}_{\{U_t<b\}}dt}\right]=\left\{\begin{array}{cc}
\frac{q}{\xi}+\sum_{m=1}^{M_{\xi}^+}\sum_{i=1}^{M^{\xi}_{m}}H_{mi}\frac{(b-x)^{i-1}}{(i-1)!}e^{\beta_{m,\xi}(x-b)}, & x \leq b,\\
1+\sum_{n=1}^{N_q^-}\sum_{i=1}^{N^q_{n}}G_{ni}\frac{(x-b)^{i-1}}{(i-1)!}e^{\gamma_{n,q}(b-x)},&  x \geq b,
\end{array}\right.
\end{equation}
where $\xi=p+q$, for $1\leq m \leq M_{\xi}^+$, $0 \leq i \leq M^{\xi}_{m}-1$,
\begin{equation}
(-1)^{M^{\xi}_{m}-i}H_{m,M^{\xi}_{m}-i}=\frac{\partial^{i}}{i!\partial x^i}\left(f(x)(x-\beta_{m,\xi})^{M^{\xi}_{m}}\right)_{x=\beta_{m,\xi}},
\end{equation}
and for $1\leq n \leq N_q^-$, $0\leq i \leq N^q_{n}-1$,
\begin{equation}
G_{n,N^q_{n}-i}=\frac{\partial^{i}}{i!\partial x^i}\left(f(x)(x+\gamma_{n,q})^{N^q_{n}}\right)_{x=-\gamma_{n,q}}.
\end{equation}
Here, in (3.2) and (3.3), the function $f(x)$ is given by
\begin{equation}
f(x)=\frac{p}{\xi}\frac{\prod_{m=1}^{M_{\xi}^+}(-\beta_{m,\xi})^{M^{\xi}_{m}}
\prod_{n=1}^{N_q^-}(\gamma_{n,q})^{N^q_{n}}\prod_{k=1}^{m^+}(x-\eta_k)^{m_k}
\prod_{k=1}^{n^-}(x+\vartheta_k)^{n_k}}{ \prod_{k=1}^{m^+}(-\eta_k)^{m_k}\prod_{k=1}^{n^-}(\vartheta_k)^{n_k}x \prod_{m=1}^{M_{\xi}^+}(x-\beta_{m,\xi})^{M^{\xi}_{m}}\prod_{n=1}^{N_q^-}(x+\gamma_{n,q})^{N^q_{n}}}.
\end{equation}
\end{Theorem}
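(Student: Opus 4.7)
\textbf{The plan is} to combine the strong Markov property of $U$ with a Feynman--Kac-style ansatz, and then pin down the coefficients by Laplace-transforming both sides of the resulting integral equations and invoking Lemma 2.5. More concretely, I would first derive two coupled integral equations for $F(x):=\mathbb{E}_x[e^{-p\int_0^{e(q)}\mathbf{1}_{\{U_t<b\}}dt}]$ by splitting at the first crossing of $b$. For $x\geq b$, since $U$ agrees with $X$ on $[0,\kappa_b^-)$ (Remark 2.3), the strong Markov property at $\kappa_b^-$ combined with the memorylessness of $e(q)$ yields
\begin{equation*}
F(x)=1-\mathbb{E}_x[e^{-q\kappa_b^-}]+\mathbb{E}_x[e^{-q\kappa_b^-}F(U_{\kappa_b^-})].
\end{equation*}
For $x<b$, merging the clock $q$ with the weight $p\mathbf{1}_{\{U<b\}}$ produces effective killing rate $\xi=p+q$, and since $U$ coincides with $Y$ on $[0,\kappa_b^+)$,
\begin{equation*}
F(x)=\tfrac{q}{\xi}\bigl(1-\mathbb{E}_x[e^{-\xi\kappa_b^+}]\bigr)+\mathbb{E}_x[e^{-\xi\kappa_b^+}F(U_{\kappa_b^+})].
\end{equation*}
The joint laws of $(\kappa_b^{\pm},U_{\kappa_b^{\pm}})$ are given in closed form by Lemma 2.4(1) (at rate $q$) and Lemma 2.4(3) (at rate $\xi$), after translation by $b$.

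\textbf{Next, I would} propose the ansatz (3.1), motivated by Feynman--Kac: on $(b,\infty)$ the function $F-1$ must be a bounded $q$-eigenfunction of the generator of $X$, and on $(-\infty,b)$ the function $F-q/\xi$ must be a bounded $\xi$-eigenfunction of the generator of $Y$. The bounded eigenfunctions are precisely linear combinations of $(x-b)^{i-1}e^{-\gamma_{n,q}(x-b)}$ and $(b-x)^{i-1}e^{-\beta_{m,\xi}(b-x)}$, with multiplicities prescribed by Lemmas 2.3 and 2.1 (the latter with $q$ replaced by $\xi$). Substituting (3.1) into the two integral equations turns the problem into a finite coupled linear system for $\{H_{mi}\}$ and $\{G_{ni}\}$.

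\textbf{Finally}, I would solve this system by taking Laplace transforms in the translated variables $u=b-x$ and $u=x-b$. The transformed left-hand sides are exactly $\sum_{m,i}H_{mi}(s+\beta_{m,\xi})^{-i}$ and $\sum_{n,i}G_{ni}(s+\gamma_{n,q})^{-i}$, while the transformed right-hand sides involve the Laplace transforms of $C_0,C_{kj},D_0,D_{kj}$; these are individually intractable (Remark 2.6) but collapse to rational functions of $\psi^+(s)$ and $\psi^-(s)$ via the packaged identities (2.33)--(2.34) of Lemma 2.5. Matching poles then forces the two generating functions to coincide with the partial-fraction pieces, at $\beta_{m,\xi}$ and $-\gamma_{n,q}$ respectively, of the rational function
\begin{equation*}
f(x)=\tfrac{p}{\xi}\cdot\tfrac{\psi^+(-x)\,\psi^-(x)}{x},
\end{equation*}
where $\psi^+$ refers to $Y$ killed at rate $\xi$ and $\psi^-$ to $X$ killed at rate $q$; expanding the Wiener--Hopf factors in closed form reproduces (3.4), and the residue formulas (3.2)--(3.3) follow by a Taylor expansion at each pole. \textbf{The hard part} will be executing this Laplace step while the two regions remain coupled through the boundary value $F(b)$ that arises whenever $C_0$ or $D_0$ is nonzero, and keeping the count of unknowns aligned with the number of independent conditions produced by the collapse; the case $\sigma=0$ is particularly delicate, requiring the distinction between $\tau_{b,Y}^+$ and $\tilde\tau_{b,Y}^+$ from Remark 2.3 and the degenerations of $C_0,D_0$ recorded in Remark 2.5.
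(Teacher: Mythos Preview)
Your proposal is correct and follows essentially the same route as the paper: strong Markov at the first crossing of $b$, the ansatz (3.1), Laplace transform in the spatial variable, and then Lemma~2.5 to collapse the $C_{kj},D_{kj}$ integrals into rational expressions in $\psi^{\pm}$. Your identification $f(x)=\frac{p}{\xi}\,\psi^{+}(-x)\psi^{-}(x)/x$ (with $\psi^{+}$ for $Y$ at rate $\xi$ and $\psi^{-}$ for $X$ at rate $q$) is exactly what the paper arrives at (see (3.36)).

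The one place where your description is looser than the paper's execution is the phrase ``matching poles''. After Laplace transforming, the paper obtains two identities, (3.14) and (3.17), in which the unknown coefficients $G_{ni}$ and $H_{mi}$ are still entangled with the factors $\psi^{+}(\theta)/\psi^{+}(\gamma_{n,q})$ and $\psi^{-}(\theta)/\psi^{-}(\beta_{m,\xi})$. Matching poles at $-\beta_{m,\xi}$ or $-\gamma_{n,q}$ does not directly give $f$; the key maneuver is instead to evaluate (3.14) and its derivatives at the \emph{zeros} $\theta=-\eta_k$ of $\psi^{+}$, and (3.17) at the zeros $\theta=-\vartheta_k$ of $\psi^{-}$. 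This kills the troublesome $\psi^{\pm}(\theta)$ factors and yields the clean equations (3.15) and (3.18), which say precisely that the combined rational function $f_1(x)=\sum(-1)^iH_{mi}(x-\beta_{m,\xi})^{-i}+\sum G_{ni}(x+\gamma_{n,q})^{-i}+\frac{p}{\xi x}$ vanishes to the right order at each $\eta_k$ and $-\vartheta_k$. That fixes $f_1$ up to a polynomial factor of degree $\le 2$ in the numerator (degree $\le 1$ or $\le 0$ when $\sigma=0$, by the root count (2.11), (2.17)); the remaining free constants are then determined by the continuity of $V$ at $b$ (equation (3.10)), and in the $\sigma>0$ case also the continuity of $V'$ at $b$ (equation (3.11)), together with the residue $\frac{p}{\xi}$ at $x=0$. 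You anticipate these boundary issues in your final paragraph, but be aware that they are not merely technical: without (3.10)--(3.11) the system is genuinely underdetermined, and the four-way case split on $(\sigma,\mu)$ is there precisely to supply the right number of extra conditions in each regime.
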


\begin{proof}
The derivation consists of three steps. The first step is similar to step (1) in the proof of Theorem 3.1 in Zhou and Wu (2015).

Step 1. Define a function of $x$ as
\begin{equation}
V(x) = \mathbb E_x\left[e^{-p\int_{0}^{e(q)}\textbf{1}_{\{U_s < b\}}ds}\right].
\end{equation}

For $x < b$, it follows from the strong Markov property that
\begin{equation}
\begin{split}
&V(x)
=\mathbb E_x\left[\int_{0}^{\kappa_b^+}q e^{-q t}e^{-p t}dt\right] + \mathbb E_x\left[\int_{\kappa_b^+}^{+\infty}q e^{-q t}e^{-p \int_{0}^{t}\textbf{1}_{\{U_s < b\}}ds}dt\right]\\
&=\frac{q}{\xi}\left(1 - \mathbb E_x\left[e^{-\xi \kappa_b^+}\right]\right)+\mathbb E_x\left[e^{-\xi\kappa_b^+}V(U_{\kappa_b^+})\right]
=\frac{q}{\xi}\Big(1 - \tilde{\mathbb E}_x\left[e^{-\xi \tau_{b,Y}^+}\right]\Big)+ \tilde{\mathbb E}_x\left[e^{-\xi \tau_{b,Y}^+}V(Y_{\tau_{b,Y}^+})\right],
\end{split}
\end{equation}
where $\xi =p+q$ and in the third equality we have used that $\{Y_t, t < \tau_{b,Y}^+ \}$ under $\tilde{\mathbb P}_x$  and $\{U_t, t < \kappa_b^+ \}$ under $\mathbb P_x$ have the same law when $x < b$.
Applying (2.21) to (3.6), we will obtain that
\begin{equation}
\begin{split}
V(x)
&=\sum_{k=1}^{m^+}\sum_{j=1}^{m_k}C_{kj}(b-x) \left(\int_{0}^{\infty}\frac{(\eta_k)^jy^{j-1}}{(j-1)!}e^{-\eta_k y}V(b+y)dy-\frac{q}{\xi}\right)
+\frac{q}{\xi}+C_0(b-x)\left(V(b)-\frac{q}{\xi}\right), \ \ x < b.
\end{split}
\end{equation}
Note that $C_0(b-x)$ and $C_{kj}(b-x)$ in (3.7) depend on $\xi$.

For $x \geq b$, using the strong Markov property again, we can obtain that
\begin{equation}
\begin{split}
V(x)
&=\mathbb E_x\left[ \int_{\kappa_b^-}^{+\infty}q e^{-q t}e^{-p\int_{\kappa_b^-}^{t}\textbf{1}_{\{U_s < b\}}ds}dt\right]+
\mathbb E_x\left[1-e^{-q\kappa_b^-}\right]\\
&=\mathbb E_x\left[e^{-q\kappa_b^-}V(U_{\kappa_b^-})\right]+\mathbb E_x\left[1-e^{-q \kappa_b^-}\right] =\mathbb E_x\left[e^{-q \tau_{b,X}^-}V(X_{\tau_{b,X}^-})\right]+\mathbb E_x\left[1-e^{-q \tau_{b,X}^-}\right]\\
&=\sum_{k=1}^{n^-}\sum_{j=1}^{n_k}D_{kj}(b-x)\left(\int_{-\infty}^{0}(\vartheta_k)^j\frac{(-y)^{j-1}}{(j-1)!}
e^{\vartheta_k y}V(b+y)dy-1\right)
 +1+D_0(b-x)\left(V(b)-1\right),
\end{split}
\end{equation}
where the third equality follows from the fact that $\{X_t, t < \tau_{b,X}^-\}$ and $\{U_t, t < \kappa_b^-\}$ under $\mathbb P_x$ have the same law for $x \geq b$; and we have used (2.19) in the fourth equality.

From Remark 2.3, (3.7) and (3.8), we can deduce that $V(x)$ must have the following form:
\begin{equation}
V(x)=\left\{\begin{array}{cc}
\frac{q}{\xi}+\sum_{k=1}^{M_{\xi}^+}\sum_{j=1}^{M^{\xi}_{k}}H_{kj}\frac{(b-x)^{j-1}}{(j-1)!}
e^{-\beta_{k,\xi}(b-x)}, & x < b,\\
1+\sum_{k=1}^{N_q^-}\sum_{j=1}^{N^q_{k}}G_{kj}\frac{(x-b)^{j-1}}{(j-1)!}
e^{\gamma_{k,q}(b-x)}, & x \geq b,
\end{array}\right.
\end{equation}
where $H_{kj}$ and $G_{kj}$ do not depend on $x$. Thus, the remaining thing is to derive expressions for $H_{kj}$ and $G_{kj}$ in (3.9), which will be done in the next two steps.

Step 2. It is easy to see from (3.7), (3.8) and (2.26) that $V(x)$ is continuous at $b$, thus
\begin{equation}
\sum_{n=1}^{N_q^-}G_{n1}+\frac{p}{\xi}=\sum_{m=1}^{M_{\xi}^+}H_{m1}.
\end{equation}
In addition, the derivative of $V(x)$ at $b$ is also continuous (see Remark A.5 in Zhou and Wu (2015)). This will lead to
\begin{equation}
-\sum_{n=1}^{N_q^-}G_{n1}\gamma_{n,q}+\sum_{n=1}^{N_q^-}G_{n2}\textbf{1}_{\{N^q_{n} \geq 2\}}=\sum_{m=1}^{M_{\xi}^+}H_{m1}\beta_{m,\xi}-\sum_{m=1}^{M_{\xi}^+}H_{m2}\textbf{1}_{\{M^{\xi}_{m}\geq 2\}}.
\end{equation}
Applying the expression of $V(x)$ for $x \geq b$ in (3.9), we can derive the following result from (3.7):
\begin{equation}
\begin{split}
V(x)
&=\sum_{k=1}^{m^+}\sum_{j=1}^{m_k}C_{kj}(b-x)\left\{\sum_{n=1}^{N_q^-}\sum_{i=1}^{N^q_{n}}
\frac{G_{ni}(\eta_k)^j(i+j-2)!}{(\eta_k+\gamma_{n,q})^{i+j-1}(i-1)!(j-1)!}+\frac{p}{\xi}\right\}
 +\frac{q}{\xi}+C_0(b-x)\left(V(b)-\frac{q}{\xi}\right), \ \ for \ \ x< b.
\end{split}
\end{equation}
Therefore, it follows from (3.9), (3.12) and the fact that $V(b)=1+\sum_{n=1}^{N_q^-}G_{n1}$ (which is due to (3.9)), we obtain that the following identity must hold for all $x < b$:
\begin{equation}
\begin{split}
&\sum_{k=1}^{m^+}\sum_{j=1}^{m_k}C_{kj}(b-x)\left\{\sum_{n=1}^{N_q^-}\sum_{i=1}^{N^q_{n}}
\frac{G_{ni}(\eta_k)^j(i+j-2)!}{(\eta_k+\gamma_{n,q})^{i+j-1}(i-1)!(j-1)!}+\frac{p}{\xi}\right\}
+C_0(b-x)\left(\sum_{n=1}^{N_q^-}G_{n1}+\frac{p}{\xi}\right)\\
&=\sum_{m=1}^{M_{\xi}^+}\sum_{i=1}^{M^{\xi}_{m}}
H_{mi}\frac{(b-x)^{i-1}}{(i-1)!}e^{-\beta_{m,\xi}(b-x)}.
\end{split}
\end{equation}
Note first that
\[
\frac{G_{ni}(\eta_k)^j(i+j-2)!}{(\eta_k+\gamma_{n,q})^{i+j-1}(i-1)!(j-1)!}=\frac{G_{ni}(-1)^{i-1}}{(i-1)!}
\frac{\partial^{i-1}}{\partial s^{i-1}}\left(\frac{(\eta_k)^j}{(\eta_k+s)^j}\right)_{s=\gamma_{n,q}}.
\]
Next, for given $\theta > 0$, multiplying both sides of (3.13) by $e^{\theta (x-b)}$ and integrating with respect to $x$ from $-\infty$ to $b$ and interchanging the order of sum will yield
\begin{equation}
\begin{split}
& \ \ \sum_{n=1}^{N_q^-}\sum_{i=1}^{N^q_{n}}\frac{G_{ni}(-1)^{i-1}}{(i-1)!}\frac{\partial^{i-1}}{\partial s^{i-1}}\left(\frac{\psi^+(\theta)}{\psi^+(s)(s-\theta)}-\frac{1}{s-\theta}\right)_{s=\gamma_{n,q}}
+\frac{p(1-\psi^+(\theta))}{\xi \theta}\\
&=\sum_{m=1}^{M_{\xi}^+}\sum_{i=1}^{M^{\xi}_{m}}\frac{H_{mi}}{(\theta+\beta_{m,\xi})^i},
\end{split}
\end{equation}
where in the derivation of the left-hand side of (3.14), we have used (2.27).

As both sides of (3.14) are rational functions of $\theta$, we can extend identity (3.14) to the whole complex plane except at $0$, $-\beta_{1,\xi},\ldots, -\beta_{M_{\xi}^+,\xi}$ and $\gamma_{1,q}, \ldots, \gamma_{N_q^-,q}$. Then, for any given $1 \leq k \leq m^+$ and $0\leq j \leq m_k-1$, taking a derivative on both sides of (3.14) with respect to $\theta $ up to $j$ order and letting $\theta$ equal to $-\eta_k$, we have
\begin{equation}
\begin{split}
&\sum_{n=1}^{N_q^-}\sum_{i=1}^{N^q_{n}}\frac{G_{ni}(-1)(i+j-1)!}{(i-1)!(\eta_k+\gamma_{n,q})^{i+j}}
-\frac{p j!}{\xi(\eta_k)^{j+1}}=\sum_{m=1}^{M_{\xi}^+}\sum_{i=1}^{M^{\xi}_{m}}\frac{H_{mi}(i+j-1)!(-1)^i}{(i-1)!
(\eta_k-\beta_{m,\xi})^{i+j}},
\end{split}
\end{equation}
which can be proved by noting that $\frac{\partial^j}{\partial \theta^j}\big(\psi^+(\theta)\big)_{\theta=-\eta_k}=0$ for $j < m_k$ (see (2.10)).

Similar to the derivation of (3.13), it follows from (3.8) and (3.9) that the following identity holds for all $x>b$:
\begin{equation}
\begin{split}
&\sum_{k=1}^{n^-}\sum_{j=1}^{n_k}D_{kj}(b-x)\left\{\sum_{m=1}^{M_{\xi}^+}\sum_{i=1}^{M^{\xi}_{m}}
\frac{H_{mi}(\vartheta_k)^j(i+j-2)!}{(\vartheta_k+\beta_{m,\xi})^{i+j-1}(i-1)!(j-1)!}-\frac{p}{\xi}\right\} +D_0(b-x)\left(V(b)-1\right)\\
&=\sum_{n=1}^{N_q^-}
\sum_{i=1}^{N^q_{n}}G_{ni}\frac{(x-b)^{i-1}}{(i-1)!}e^{\gamma_{n,q}(b-x)}.
\end{split}
\end{equation}
From (2.28), (3.16) and the fact of $V(b)-1=\sum_{n=1}^{N_q^-}G_{n1}=\sum_{m=1}^{M_{\xi}^+}H_{m1}-\frac{p}{\xi}$ (see (3.10)), for $\theta >0$, we have (see the derivation of (3.14))
\begin{equation}
\begin{split}
&\sum_{m=1}^{M_{\xi}^+}\sum_{i=1}^{M^{\xi}_{m}}
\frac{H_{mi}(-1)^{i-1}}{(i-1)!}\frac{\partial^{i-1}}{\partial s^{i-1}}\left(
\frac{\psi^-(\theta)}{\psi^-(s)(s-\theta)}-\frac{1}{s-\theta}\right)_{s=\beta_{m,\xi}} -\frac{p(1-\psi^-(\theta))}{\xi \theta}\\
&=
\sum_{n=1}^{N_q^-}\sum_{i=1}^{N^q_{n}}\frac{G_{ni}}{(\theta+\gamma_{n,q})^i}.
\end{split}
\end{equation}
After that, similar to deriving (3.15),  for any given $1 \leq k \leq n^-$, $0\leq j \leq n_k-1$, if we take a derivative on both sides of (3.17) with respect to $\theta $ up to $j$ order and let $\theta$ equal to $-\vartheta_k$, then we can derive that (note that $\frac{\partial^j}{\partial \theta^j}\big(\psi^-(\theta)\big)_{\theta=-\vartheta_k}=0$, see (2.15))
\begin{equation}
\begin{split}
&\sum_{m=1}^{M_{\xi}^+}\sum_{i=1}^{M^{\xi}_{m}}
\frac{H_{mi}(-1)(i+j-1)!}{(i-1)!(\vartheta_k+\beta_{m,\xi})^{i+j}}+\frac{p j!}{\xi (\vartheta_k)^{j+1}}
=
\sum_{n=1}^{N_q^-}\sum_{i=1}^{N^q_{n}}\frac{G_{ni}(-1)^i(i+j-1)!}{(i-1)!(\vartheta_k-\gamma_{n,q})^{i+j}}.
\end{split}
\end{equation}

Until now, we have derived some equations, from which expressions for $H_{kj}$ and $G_{kj}$ in (3.9) can be obtained. In the third step, we will give the details of solving these equations.

Step 3. Suppose $H_{mi}$ and $G_{ni}$ are solutions of (3.10), (3.11), (3.15) and (3.18), and define a function of $x$ as
\begin{equation}
f_1(x)=\sum_{m=1}^{M_{\xi}^+}\sum_{i=1}^{M^{\xi}_{m}}\frac{H_{mi}(-1)^i}{(x-\beta_{m,\xi})^i}+
\sum_{n=1}^{N_q^-}\sum_{i=1}^{N^q_{n}}\frac{G_{ni}}{(x+\gamma_{n,q})^i}+\frac{p}{\xi x}.
\end{equation}

For fixed $1 \leq k \leq m^+$ and $0\leq j \leq m_k-1$, formula (3.15) yields that $\frac{\partial^j}{\partial x^j}\left(f_1(x)\right)_{x=\eta_k}=0$, which means that $\eta_k$ is a root of the equation $f_1(x)=0$ with multiplicity $m_k$. Similarly, from (3.18), for $1 \leq k \leq n^-$, we obtain that $-\vartheta_k$ is a $n_k$-multiplicity root of $f_1(x)=0$. So $f_1(x)$ can be written as (because $\sum_{m=1}^{M^+_{\xi}}M_m^{\xi}+\sum_{n=1}^{N_q^-}N_n^q=2+\sum_{k=1}^{m^+}m_k+\sum_{k=1}^{n^-}n_k$, see (2.9) and (2.14))
\begin{equation}
f_1(x)=\frac{\prod_{k=1}^{m^+}(x-\eta_k)^{m_k}\prod_{k}^{n^-}(x+\vartheta_k)^{n_k}(A_1x^2+A_2x+A_3)}
{x\prod_{m=1}^{M_{\xi}^+}(x-\beta_{m,\xi})^{M^{\xi}_{k}}\prod_{n=1}^{N_q^-}(x+\gamma_{n,q})^{N^q_{n}}},
\end{equation}
with some proper constants $A_1,A_2, A_3$.

Formula (3.10) implies that $A_1=0$. For the value of $A_2$, we have
\begin{equation}
\begin{split}
A_2
&=\sum_{m=1}^{M_{\xi}^+}H_{m1}(-1)(A_4+\beta_{m,\xi})+\sum_{m=1}^{M_{\xi}^+}H_{m2}
\textbf{1}_{\{M^{\xi}_m\geq 2\}} +
\sum_{n=1}^{N_q^-}G_{n1}(A_4-\gamma_{n,q})+\sum_{n=1}^{N_q^-}G_{n2}\textbf{1}_{\{N^q_{n}\geq 2\}}+\frac{p}{\xi}A_4\\
&=0,
\end{split}
\end{equation}
where $A_4=-\sum_{m=1}^{M_{\xi}^+}M_m^{\xi}\beta_{m,\xi}+\sum_{n=1}^{N_q^-}N_n^q\gamma_{n,q}$ and the second equality is due to (3.10) and (3.11).
Furthermore, note that $\lim_{x\rightarrow 0}f_1(x)x=\frac{p}{\xi}$ in (3.19). Hence,
\begin{equation}
A_3=\frac{p\prod_{m=1}^{M_{\xi}^+}(-\beta_{m,\xi})^{M^{\xi}_{m}}
\prod_{n=1}^{N_q^-}(\gamma_{n,q})^{N^q_{n}}}{\xi \prod_{k=1}^{m^+}(-\eta_k)^{m_k}\prod_{k=1}^{n^-}(\vartheta_k)^{n_k}}.
\end{equation}
Therefore, $f_1(x)$ equals $f(x)$ given by (3.4). Thus, formulas (3.2) and (3.3) are derived from (3.4) and (3.19) by writing $f(x)$ in its fraction expansion.
\end{proof}

From Theorem 3.1, the expectation of $\int_{0}^{e(q)}\textbf{1}_{\{U_s < b\}}ds$ is deduced.
\begin{Corollary}
For any given $q > 0$,
\begin{equation}
\begin{split}
&\mathbb E_x\left[\int_{0}^{e(q)}\textbf{1}_{\{U_t<b\}}dt\right]
=\left\{\begin{array}{cc}
\frac{1}{q}-\frac{1}{q}\sum_{m=1}^{M_q^+}\sum_{i=1}^{M^q_{m}}H^0_{mi}\frac{(b-x)^{i-1}}{(i-1)!}e^{\beta_{m,q}(x-b)}, & x \leq b,\\
-\frac{1}{q}\sum_{n=1}^{N_q^-}\sum_{i=1}^{N^q_{n}}G^0_{ni}\frac{(x-b)^{i-1}}{(i-1)!}e^{\gamma_{n,q}(b-x)},&  x \geq b,
\end{array}\right.
\end{split}
\end{equation}
where $H^0_{mi}$ and $G^0_{ni}$ are given by rational expansion:
\begin{equation}
\begin{split}
&\sum_{m=1}^{M_{q}^+}\sum_{i=1}^{M^{q}_{m}}\frac{H^0_{mi}(-1)^i}{(x-\beta_{m,q})^i}+
\sum_{n=1}^{N_q^-}\sum_{i=1}^{N^q_{n}}\frac{G^0_{ni}}{(x+\gamma_{n,q})^i}+\frac{1}{x}
=f^0(x),
\end{split}
\end{equation}
i.e.,
\begin{equation}
\begin{split}
&(-1)^{M^q_{m}-i}H^0_{m,M^q_{m}-i}=\frac{\partial^{i}}{i!\partial x^i}\left(f^0(x)(x-\beta_{m,q})^{M^q_{m}}\right)_{x=\beta_{m,q}},\\
&G^0_{n,N^q_{n}-i}=\frac{\partial^{i}}{i!\partial x^i}\left(f^0(x)(x+\gamma_{n,q})^{N^q_{n}}\right)_{x=-\gamma_{n,q}}.
\end{split}
\end{equation}
Here $f^0(x)$ is a rational function and is given by
\begin{equation}
f^0(x)=\frac{\prod_{m=1}^{M_q^+}(-\beta_{m,q})^{M^q_{m}}
\prod_{n=1}^{N_q^-}(\gamma_{n,q})^{N^q_{n}}\prod_{k=1}^{m^+}(x-\eta_k)^{m_k}
\prod_{k=1}^{n^-}(x+\vartheta_k)^{n_k}}{ \prod_{k=1}^{m^+}(-\eta_k)^{m_k}\prod_{k=1}^{n^-}(\vartheta_k)^{n_k}x \prod_{m=1}^{M_q^+}(x-\beta_{m,q})^{M^q_{m}}\prod_{n=1}^{N_q^-}(x+\gamma_{n,q})^{N^q_{n}}}.
\end{equation}
\end{Corollary}

\begin{proof}
Since $p>0$, it is clear that $0\leq \mathbb E_x\left[e^{-p\int_{0}^{e(q)}\textbf{1}_{\{U_s < b\}}ds}\right] \leq 1$. Recalling Lemma 2.3 and using (3.1), we have
\[
\lim_{x \uparrow \infty}\frac{\mathbb E_x\left[e^{-p\int_{0}^{e(q)}\textbf{1}_{\{U_s < b\}}ds}\right]-1}
{e^{\gamma_{1,q}(b-x)}}=G_{11}.
\]
Note that the expectation $\mathbb E\left[e^{\phi\underline{X}_{e(q)}}\right]$ exists for $Re(\phi)>-\gamma_{1,q}$ (see (2.15)).

From (3.1), (3.19) and the fact that $f_1(x)=f(x)$, for given $p, q>0$ and $-\gamma_{1,q} < Re(\phi) <0$, we know
\begin{equation}
\begin{split}
&\int_{-\infty}^{\infty}e^{-\phi(x-b)}\left\{\mathbb E_x\left[e^{-p\int_{0}^{e(q)}\textbf{1}_{\{U_s < b\}}ds}\right]-1\right\}dx=f(\phi)=\frac{p}{\phi(p+q)}\tilde{\mathbb E}\left[e^{\phi\overline{Y}_{e(p+q)}}\right]
\mathbb E\left[e^{\phi\underline{X}_{e(q)}}\right],
\end{split}
\end{equation}
where $f(\cdot)$ is given by (3.4) and the second equality follows from (2.10), (2.15) and (3.4). We can prove that (see Lemma 3.1 below)
\begin{equation}
\begin{split}
&\int_{-\infty}^{\infty}e^{-\phi(x-b)}\mathbb E_x\left[\int_{0}^{e(q)}\textbf{1}_{\{U_s < b\}}ds\right]dx
=-\frac{1}{q \phi}\tilde{\mathbb E}\left[e^{\phi\overline{Y}_{e(q)}}\right]
\mathbb E\left[e^{\phi\underline{X}_{e(q)}}\right],
\end{split}
\end{equation}
which holds for $-\gamma<Re(\phi)<0$, where $\gamma$ is a constant and satisfies $0<\gamma\leq\gamma_{1,q}$.

Define
\[
g(x)=\left\{\begin{array}{cc}
\frac{1}{q}-\frac{1}{q}\sum_{m=1}^{M_q^+}\sum_{i=1}^{M^q_{m}}H^0_{mi}\frac{(b-x)^{i-1}}{(i-1)!}e^{\beta_{m,q}(x-b)}, & x < b,\\
-\frac{1}{q}\sum_{n=1}^{N_q^-}\sum_{i=1}^{N^q_{n}}G^0_{ni}\frac{(x-b)^{j-1}}{(j-1)!}e^{\gamma_{n,q}(b-x)},&  x \geq b,
\end{array}\right.
\]
where $H^0_{mi}$ and $G^0_{ni}$ are given by (3.25).
For $-\gamma_{1,q} < Re(\phi) < 0$, we can show that
\begin{equation}
\int_{-\infty}^{\infty}e^{-\phi (x-b)}g(x)dx=-\frac{1}{q \phi}\tilde{\mathbb E}\left[e^{\phi\overline{Y}_{e(q)}}\right]
\mathbb E\left[e^{\phi\underline{X}_{e(q)}}\right],
\end{equation}
where the equality follows from (3.24) and (3.26). Formulas (3.28) and (3.29) imply that
both sides of (3.23) have the same Laplace (or Fourier) transform.

Obviously, $g(x)$ is continuous on $\mathbb R/\{b\}$. As the coefficient of $x^{\sum_{m=1}^{M_q^+}M_m^q+\sum_{n=1}^{N_q^-}N_n^q}$ in the numerator of $f^0(x)$ in (3.26) is $0$ (note that $\sum_{m=1}^{M_q^+}M_m^q+\sum_{n=1}^{N_q^-}N_n^q=2+\sum_{m=1}^{m^+}m_k+\sum_{n=1}^{n^-}n_k$),
 it follows from (3.24) and (3.26) that
\begin{equation}
1-\sum_{m=1}^{M_q^+}H^0_{m1}+\sum_{n=1}^{N_q^-}G^0_{n1}=0.
\end{equation}
This gives that $g(x)$ is also continuous at $b$, thus  $g(x)$ is continuous on $\mathbb R$.

If we define $V_1(x)=\mathbb P_x\left(U_{e(q)}<b\right)$, then similar computations as in (3.6) and (3.8) will produce
\begin{equation}
V_1(x)=\left\{\begin{array}{cc}
\mathbb E_x\left[e^{-q\tau_{b,X}^-}V_1(X_{\tau_{b,X}^-})\right], \ \ x \geq b,\\
1+\tilde{\mathbb E}_x\left[e^{-q\tau_{b,Y}^+}\left(V_1(Y_{\tau_{b,Y}^+})-1\right)\right],&  x < b,
\end{array}\right.
\end{equation}
which combined with (2.19), (2.21) and Remark 2.3, leads to that $V_1(x)$ is continuous on $\mathbb R$.
This implies that $\mathbb E_x\left[\int_{0}^{e(q)}\textbf{1}_{\{U_s < b\}}ds\right]$, as a function of $x$, is also continuous on $\mathbb R$ (see (3.32) below).

Therefore, formula (3.23) is derived from (3.28) and (3.29).
\end{proof}

\begin{Lemma}
There is a constant $0<\gamma \leq \gamma_{1,q}$ such that (3.28) holds for $-\gamma<Re(\phi)<0$.
\end{Lemma}
\begin{proof}
For $p>0$, it is obvious that
\[
\begin{split}
\frac{\partial}{\partial p}\left(1-\mathbb E_x\left[e^{-p\int_{0}^{e(q)}\textbf{1}_{\{U_s < b\}}ds}\right]\right)
&=\mathbb E_x\left[\int_{0}^{e(q)}\textbf{1}_{\{U_s < b\}}dse^{-p\int_{0}^{e(q)}\textbf{1}_{\{U_s < b\}}ds}\right]\leq \mathbb E_x\left[\int_{0}^{e(q)}\textbf{1}_{\{U_s < b\}}ds\right].
\end{split}
\]
Besides, applying integration by parts yields
\begin{equation}
q\mathbb E_x\left[\int_{0}^{e(q)}\textbf{1}_{\{U_t<b\}}dt\right]=q\int_{0}^{\infty}e^{-qt}\mathbb P_x(U_t<b)dt=\mathbb P_x\left(U_{e(q)}<b\right).
\end{equation}
Hence,
\[
q\mathbb E_x\left[\int_{0}^{e(q)}\textbf{1}_{\{U_s < b\}}ds\right]\leq
\left\{
\begin{array}{cc}
q\mathbb E_x\left[e(q)\right]=1,&x\leq b,\\
\mathbb P_x\left(U_{e(q)}<b\right) \leq
\tilde{\mathbb P}_x\left(\underline{Y}_{e(q)}<b\right) \vee
\mathbb P_x\left(\underline{X}_{e(q)}<b\right) ,&x>b.
\end{array}\right.
\]
It follows from (2.16) that $\mathbb P\left(\underline{X}_{e(q)} < x\right)\thicksim \frac{D_1}{\gamma_{1,q}}e^{\gamma_{1,q}x}$ when $x\downarrow -\infty$. Similarly, we can conclude that there are two constants $\hat{D}_1$ and $\hat{\gamma}_{1,q}>0$ such that $\tilde{\mathbb P}
\left(\underline{Y}_{e(q)} < x\right)\thicksim \frac{\hat{D}_1}{\hat{\gamma}_{1,q}}e^{\hat{\gamma}_{1,q}x}$
when $x\downarrow -\infty$.

For $\max(-\gamma_{1,q},-\hat{\gamma}_{1,q})<Re(\phi)<0$, the above results give
\[
\int_{-\infty}^{\infty}|e^{-\phi(x-b)}|\mathbb E_x\left[\int_{0}^{e(q)}\textbf{1}_{\{U_s < b\}}ds\right]dx < \infty,
\]
which leads to
\[
\frac{\partial }{\partial p}\int_{-\infty}^{\infty}e^{-\phi(x-b)}\left\{\mathbb E_x\left[e^{-p\int_{0}^{e(q)}\textbf{1}_{\{U_s < b\}}ds}\right]-1\right\}dx
=-\int_{-\infty}^{\infty}e^{-\phi(x-b)}\left\{\mathbb E_x\left[
\int_{0}^{e(q)}\textbf{1}_{\{U_s < b\}}dse^{-p\int_{0}^{e(q)}\textbf{1}_{\{U_s < b\}}ds}\right]\right\}dx.
\]

In addition, for $Re(\phi)<0$, from Theorem 6.16 in Kyprianou (2006), we have
\[
\tilde{\mathbb E}\left[e^{\phi\overline{Y}_{e(p+q)}}\right]=e^{\int_{0}^{\infty}\frac{1}{t}e^{-(p+q)t}
\int_{-\infty}^{0}(e^{-\phi x}-1)\tilde{\mathbb P}\left(Y_t \in dx\right)dt},
\]
thus
\begin{equation}
\frac{\partial}{\partial p}\tilde{\mathbb E}\left[e^{\phi\overline{Y}_{e(p+q)}}\right]
=-
\int_{0}^{\infty}e^{-(p+q)t}
\int_{-\infty}^{0}(e^{-\phi x}-1)\tilde{\mathbb P}\left(Y_t \in dx\right)dt
\times
e^{\int_{0}^{\infty}\frac{1}{t}e^{-(p+q)t}
\int_{-\infty}^{0}(e^{-\phi x}-1)\tilde{\mathbb P}\left(Y_t \in dx\right)dt}.
\end{equation}

Therefore, on both sides of (3.27), taking a derivative with respect to $p$ and then letting $p\downarrow 0$ will lead to (3.28), where $\gamma=\gamma_{1,q}\wedge \hat{\gamma}_{1,q}$.
\end{proof}

\begin{Remark}
Formula (3.32) means that the probability $\mathbb P_x\left(U_{e(q)}<b\right)$ can be obtained from Corollary 3.1. Here we comment that from (3.31), one can establish (3.23) by using a similar derivation to the proof of Theorem 3.1.
\end{Remark}

\section{A conjecture}
In the proof of Corollary 3.1, for given $q>0$ and $-\gamma < Re(\phi) <0$, we have shown that (see (3.28))
\[
\int_{-\infty}^{\infty}e^{-\phi(x-b)}\mathbb E_x\left[\int_{0}^{e(q)}\textbf{1}_{\{U_s < b\}}ds\right]dx
=-\frac{1}{q \phi}\tilde{\mathbb E}\left[e^{\phi\overline{Y}_{e(q)}}\right]
\mathbb E\left[e^{\phi\underline{X}_{e(q)}}\right].
\]
Moreover, using (3.32) and integration by parts, we have
\[
q\phi\int_{-\infty}^{\infty}e^{\phi(b-x)}\mathbb E_x\left[\int_{0}^{e(q)}\textbf{1}_{\{U_s < b\}}ds\right]dx
=\phi\int_{-\infty}^{\infty}e^{\phi(b-x)}\mathbb P_x\left(U_{e(q)}<b\right)dx
=\int_{-\infty}^{\infty}e^{\phi(b-x)}d\left(\mathbb P_x\left(U_{e(q)}<b\right)\right),
\]
where the second equality is due to (3.23), (3.32) and the condition $-\gamma < Re(\phi) <0$.
Thus, for fixed $b \in \mathbb R$, it follows from the last two formulas that
\begin{equation}
-\int_{-\infty}^{\infty}e^{-\phi(x-b)}d\left(\mathbb P_x\left(U_{e(q)}<b\right)\right)=\tilde{\mathbb E}\left[e^{\phi\overline{Y}_{e(q)}}\right]\mathbb E\left[e^{\phi\underline{X}_{e(q)}}\right].
\end{equation}
Note that identity (4.1) can be extended  analytically to $\phi$ with $Re(\phi)=0$. In fact, for $Re(\phi)=0$, from (3.23) and (3.32), some direct computations yield
\begin{footnotesize}
\[
\begin{split}
-\int_{-\infty}^{\infty}e^{-\phi(x-b)}d\left(\mathbb P_x\left(U_{e(q)}<b\right)\right)
&=
\int_{-\infty}^{b}e^{-\phi(x-b)}\frac{\partial }{\partial x}
\left(\sum_{m=1}^{M_q^+}\sum_{i=1}^{M^q_{m}}H^0_{mi}\frac{(b-x)^{i-1}}{(i-1)!}e^{\beta_{m,q}(x-b)}\right)dx\\
&+\int_{b}^{\infty}e^{-\phi(x-b)}\frac{\partial }{\partial x}\left(\sum_{n=1}^{N_q^-}\sum_{i=1}^{N^q_{n}}G^0_{ni}\frac{(x-b)^{i-1}}{(i-1)!}e^{\gamma_{n,q}(b-x)}\right)dx.
\end{split}
\]
\end{footnotesize}
Note that
\begin{small}
\[
\begin{split}
&\int_{-\infty}^{0}e^{-\phi x}\frac{\partial }{\partial x}
\left(x^{i-1}e^{\beta_{m,q}x}\right)dx
=
\int_{-\infty}^{0}e^{-\phi x}\frac{\partial }{\partial x}
\left(\frac{\partial^{i-1}}{\partial \beta^{i-1}}\left(e^{\beta x }\right)_{\beta=\beta_{m,q}}\right)dx\\
&=\int_{-\infty}^{0}e^{-\phi x}\frac{\partial^{i-1}}{\partial \beta^{i-1}}\left(\beta e^{\beta x }\right)_{\beta=\beta_{m,q}}dx
=\frac{\partial^{i-1}}{\partial \beta^{i-1}}\left(1+\frac{\phi}{\beta-\phi}
\right)_{\beta=\beta_{m,q}}.
\end{split}
\]
\end{small}
Similarly,
\[
\int_{0}^{\infty}e^{-\phi x}\frac{\partial }{\partial x}\left((-x)^{i-1}e^{-\gamma_{n,q}x}\right)dx=
\frac{\partial^{i-1}}{\partial \gamma^{i-1}}\left(-1+\frac{\phi}{\gamma+\phi}
\right)_{\gamma=\gamma_{n,q}}.
\]
Therefore, we have
\begin{footnotesize}
\[
\begin{split}
&-\int_{-\infty}^{\infty}e^{-\phi(x-b)}
d\left(\mathbb P_x\left(U_{e(q)}<b\right)\right)\\
&=\sum_{m=1}^{M_q^+}\sum_{i=1}^{M^q_{m}}H^0_{mi}\frac{(-1)^{i-1}}{(i-1)!}\frac{\partial^{i-1}}{\partial \beta^{i-1}}\left(1+\frac{\phi}{\beta-\phi}
\right)_{\beta=\beta_{m,q}}
+\sum_{n=1}^{N_q^-}\sum_{i=1}^{N^q_{n}}G^0_{ni}\frac{(-1)^{i-1}}{(i-1)!}\frac{\partial^{i-1}}{\partial \gamma^{i-1}}\left(-1+\frac{\phi}{\gamma+\phi}
\right)_{\gamma=\gamma_{n,q}}\\
&=\sum_{m=1}^{M_q^+}H^0_{m1}-\sum_{n=1}^{N_q^-}G^0_{n1}+\phi f^0(\phi)-1=\phi f^0(\phi),
\end{split}
\]
\end{footnotesize}where the second equality follows from (3.24) and the final one is due to (3.30). Hence, from (2.10), (2.15), (3.26) and the last formula, we confirm that (4.1) holds for $Re(\phi)=0$.

\begin{Conjecture}
For $Re(\phi)=0$, identity (4.1) holds for a general refracted L\'evy process $U$ and the corresponding driven process $X$.
\end{Conjecture}

\begin{Remark}
To prove Conjecture 1, one needs first to confirm that equation (2.4) exists a unique strong solution $U$ for a general L\'evy process $X$, and then to establish (4.1). As stated in Remark 2 of Kyprianou and Loedden (2010), the difficulty of proving the existence of a strong solution to (2.4) lies with the case that $X$ has paths of unbounded variation without Gaussian component. Of course, one needs to find the conditions under which the equation (2.4) has a unique strong solution. For the second step, as the process $X$ in (2.1) can be used to approximate any other L\'evy process (see Remark 2.1), identity (4.1) for a general  L\'evy process could be established by using an approximating procedure. The proof of this conjecture is beyond the scope of this paper and is left to future research.
\end{Remark}

A very special case of Conjecture 1 is $\alpha =0$ in (2.4), under which the three processes $X$, $Y$ and $U$ are equal. In this case, we have
\[
\begin{split}
-\int_{-\infty}^{\infty}e^{-\phi(x-b)}d\left(\mathbb P_x\left(X_{e(q)}<b\right)\right)
=\mathbb E\left[e^{\phi X_{e(q)}}\right]=\mathbb E\left[e^{\phi\overline{X}_{e(q)}}\right]\mathbb E\left[e^{\phi\underline{X}_{e(q)}}\right],
\end{split}
\]
where the final equality is known as the Wiener-Hopf factorization (see, e.g., Theorem 6.16 in Kyprianou (2006)). In other words, identity (4.1) reduces to the well-known Wiener-Hopf factorization if $\alpha = 0$.

\bigskip

\end{document}